\begin{document}
\title[]
{Density property of certain sets and their applications}

\author[Sahoo]
{Manas R. Sahoo}

\address{Manas R. Sahoo \newline
School of Mathematical Sciences\\
National Institute of Science Education and Research\\
Bhimpur-Padanpur, Jatni, Khurda\\
Bhubaneswar-752050, India}
\email{\newline manas@niser.ac.in}

\thanks{}
\subjclass[2000]{11B05, 11B99}
\keywords{Irrational number, Density Property}

\begin{abstract}
In this paper we show that certain sets are dense in $\mathbb{R}$. We give some applications. 
For example, we show an analytical proof that $q^{\frac{1}{n}}$, $q$ is a prime number and $e$; are irrational numbers. As another application we show:
If $f$ is an locally integrable function on $\mathbb{R}-\{0\}$ satisfying $\int_x ^{px}f(t)dt$ and $\int_x ^{qx}f(t)dt$ are constant with $\frac{\ln p}{\ln q}$ is an irrational
number; implies $f(t)=\frac{c}{t}\,\,\ a.e.$, where $c$ is constant; which is already considered in \cite{b1} for the case when $f$ is continuous.

\end{abstract}

\maketitle
\numberwithin{equation}{section}
\numberwithin{equation}{section}
\newtheorem{theorem}{Theorem}[section]
\newtheorem{remark}[theorem]{Remark}
\newtheorem{lemma}[theorem]{Lemma}
\newtheorem{definition}[theorem]{Defination}
\newtheorem{example}[theorem]{Example}
\newtheorem{corollary}[theorem]{Corollary}
\section{Introduction}
It is well known that the set $\mathbb{A}=\{m+nq : m, n \in \mathbb{Z}\}$ is a dense set in $\mathbb{R}$. Here we provide a proof using Engel expansion.
In \cite{b1}, the authors proved that a set of the form $\{\pm p^m q^n: m, n \in \mathbb{Z}\}$ is dense sub set of $\mathbb{R}$ iff $\frac{\ln p}{\ln q}$ is an irrational
number. We give here a different proof. Also the author proved the following fact: If $f$ is a continuous function on $\mathbb{R}-\{0\}$ satisfying $\int_x ^{px}f(t)dt$ and $\int_x ^{qx}f(t)dt$ are constant with $\frac{\ln p}{\ln q}$ is an irrational
number; implies $f(t)=\frac{c}{t}$, where $c$ is  constant. We extend this result to any integrble function. Another part of this paper is to give an equivalent characterization of 
irrational numbers. By using this or the variant of the proof of this equivalent statements,
 we show certain type of numbers are irrational. For example we show $e, q^\frac{1}{n}$($q$ is a prime number) are irrational numbers. For similar works, we cite \cite{A1, D,p}.

\section{Series Representation of Irrational number and density property}
For the completeness we give below the proof of Engel expansion.
\begin{theorem}
 For any irrational number $0<q<1$, there exist natural numbers $p_i\geq 2\,\,\ , i=1,2...$  with $p_i \leq p_{i+1}$ such that 
 \begin{equation}
  q=\displaystyle{ \sum_{i=1}^ \infty} \frac{1}{p_1.p_2....p_i}
  \label{1}
 \end{equation}
\end{theorem}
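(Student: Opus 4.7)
My plan is to use a greedy algorithm, producing the sequence $(p_i)$ inductively from the sequence of ``remainders'' left after peeling off successive terms of the series. Set $u_1 = q$, and given $u_i \in (0,1)$ irrational, define
\[
p_i = \left\lceil \frac{1}{u_i} \right\rceil, \qquad u_{i+1} = p_i u_i - 1.
\]
This is the natural choice because we want $1/p_i$ to be the largest unit fraction not exceeding $u_i$, so $p_i u_i \geq 1 > (p_i-1)u_i$.

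The first task is to check that the recursion is well-defined and produces the desired properties. Since $0 < u_i < 1$ we have $1/u_i > 1$, whence $p_i \geq 2$. The inequalities above give $0 \leq u_{i+1} < u_i < 1$, and irrationality of $u_i$ propagates to $u_{i+1}$ (otherwise $u_i$ would be rational), so in particular $u_{i+1} \neq 0$. Monotonicity $u_{i+1} < u_i$ forces $1/u_{i+1} > 1/u_i$, and taking ceilings yields $p_{i+1} \geq p_i$, which is the requested monotonicity of the denominators.

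Next I would derive the series identity by unwinding the recursion. Rewriting $u_i = \tfrac{1}{p_i} + \tfrac{u_{i+1}}{p_i}$ and iterating gives, for each $n$,
\[
q = u_1 = \sum_{i=1}^{n} \frac{1}{p_1 p_2 \cdots p_i} + \frac{u_{n+1}}{p_1 p_2 \cdots p_n}.
\]
Since $p_i \geq 2$, the product $p_1 \cdots p_n \geq 2^n \to \infty$, and $u_{n+1} < 1$, the error term tends to $0$. Passing to the limit yields the claimed expansion \eqref{1}.

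The only genuinely delicate point is ensuring that each $u_i$ remains strictly positive and irrational so that the algorithm never terminates or degenerates; this is handled by the observation that an equality $u_{i+1} = 0$ would mean $u_i = 1/p_i \in \mathbb{Q}$, which contradicts the inductive irrationality of $u_i$ inherited from $q$. Everything else is a routine verification of the estimates above.
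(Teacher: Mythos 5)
Your proof is correct and takes essentially the same approach as the paper: your $u_i$ is the paper's $\alpha_{i-1}$, and the greedy choice $p_i=\left\lceil 1/u_i\right\rceil$ is exactly the paper's condition $(p_i-1)\alpha_{i-1}<1<p_i\alpha_{i-1}$. The only cosmetic difference is that you carry the exact remainder $u_{n+1}/(p_1p_2\cdots p_n)$ instead of the paper's inequality $\bigl|q-\sum_{i=1}^{n}1/(p_1p_2\cdots p_i)\bigr|<1/(p_1p_2\cdots p_n)$, which streamlines the convergence step but is not a different argument.
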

\begin{proof}
 Since $0< q< 1$, there exists natural number $p_1\geq 2$ such that $(p_1-1)q<1< p_1 q< 2$.
 Now Set $\alpha_0 =q,\,\, p_0 =2$ and define $\alpha_1 =p_1 q-1$, then $0<\alpha_1<1$. Choose integer $p_2$ such that  $$(p_2-1)\alpha_1 <1< p_2 \alpha_1< 2.$$ Define $\alpha_2 =p_2 \alpha_1-1$.
 Observe that $\alpha_1 \leq \alpha_0$, which implies $p_1 \leq p_2$.

 By induction, we construct  $p_n \in \mathbb{N}$ and $\alpha_n$ satisfying the property\\
 \begin{equation}
 \begin{aligned}
  &(p_{n+1}-1)\alpha_{n} <1< p_{n+1} \alpha_{n}< 2,\\
  &\alpha_{n}=p_{n} \alpha_{n-1} -1\,\, \textnormal{ and}\\
  &p_n \leq p_{n+1}.
  \label{2}
  \end{aligned}
  \end{equation}
 
 Assume we are given $p_{i}, \,\, 1 \leq i \leq  k+1$ and $\alpha_i$,  $1 \leq i \leq k$; satisfying 
 \begin{equation*}
 \begin{aligned}
 &(p_{i+1}-1)\alpha_{i} <1< p_{i+1} \alpha_{i}< 2 \\ &\alpha_{i}=p_{i} \alpha_{i-1} -1\,\,\, \textnormal{and}\,\,\, p_i \geq p_{i-1}
 \label{}
  \end{aligned}
  \end{equation*}
 we construct $p_{k+2}$ and $\alpha_{k+1}$ as follows:
 we take $\alpha_{k+1}=p_{k+1} \alpha_{k} -1$. Choose $p_{k+2}$ such that $(p_{k+2}-1)\alpha_{k+1} <1< p_{k+2} \alpha_{k}< 2$.
 Clearly $\alpha_{k+1}\leq \alpha_{k}$. This implies $p_{k+2} \geq p_{k+1}$.
 
 Equation \eqref{2} yields:
 \begin{equation*}
 \begin{aligned}
  &\frac{1}{p_{n}}\leq \alpha_{n-1} \leq \frac{2}{p_{n}} \\
  &\Rightarrow \Big|\alpha_{n-1}-\frac{1}{p_{n}}\Big| < \frac{1}{p_{n}}\\
  &\Rightarrow \Big|p_{n-1} \alpha_{n-2}-1 -\frac{1}{p_{n}}\Big| < \frac{1}{p_{n}}\\
  &\Rightarrow \Big|\alpha_{n-2}-\frac{1}{p_{n-1}} -\frac{1}{p_{n-1}p_{n}}\Big| < \frac{1}{p_{n-1}p_{n}}\\
  \end{aligned}
   \end{equation*}
 Continuing in this way by induction we get:
 
 $$\Big|q-\displaystyle {\sum_{i=1}^{n}}\frac{1}{p_1. p_2...p_i}\Big|< \frac{1}{p_1. p_2...p_n}.$$
 Passing to the limit as $n \rightarrow \infty$ equation \eqref{1} follows.
 \end{proof}
 
 \begin{theorem}
  Define $\mathbb{A}=\{m+nq : m, n \in \mathbb{Z}\}$, $q\in \mathbb{R}$. Then the following statements are equivalent.
  \begin{itemize}
   \item[1.] $q$ is an irrational number.
   \item[2.] There exist $z_n \in \mathbb{A}$, $n\in \mathbb{N}$ such that $z_n \rightarrow 0$.
   \item[3.] $\mathbb{A}$ is dense in $\mathbb{R}$.
  \end{itemize}
 \end{theorem}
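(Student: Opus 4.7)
My plan is to prove the chain $1 \Rightarrow 2 \Rightarrow 3 \Rightarrow 1$, leaning on Theorem 2.1 (Engel expansion) for the step that requires real content.

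For $3 \Rightarrow 1$ I would argue by contrapositive: if $q = a/b$ with $a,b \in \mathbb{Z}$, $b>0$, then every element of $\mathbb{A}$ lies in $\frac{1}{b}\mathbb{Z}$, which is a discrete (hence nowhere dense) subset of $\mathbb{R}$. For $2 \Rightarrow 3$ the standard trick works: given nonzero $z_n \in \mathbb{A}$ with $z_n \to 0$, fix $r \in \mathbb{R}$ and $\varepsilon > 0$, pick $n$ with $0 < |z_n| < \varepsilon$, then choose $k \in \mathbb{Z}$ so that $|k z_n - r| \le |z_n| < \varepsilon$; since $\mathbb{A}$ is a $\mathbb{Z}$-module, $k z_n \in \mathbb{A}$. (Implicitly I read statement 2 as asserting nonzero $z_n$, otherwise it is trivial.)

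The substantive step is $1 \Rightarrow 2$, where I want to use the Engel expansion. After replacing $q$ by its fractional part (which only shifts the $m$-component and leaves $\mathbb{A}$ unchanged), I may assume $0 < q < 1$ and irrational. Theorem 2.1 produces integers $p_i \ge 2$ with $q = \sum_{i\ge 1} 1/(p_1\cdots p_i)$; let $b_n = p_1\cdots p_n$ and $a_n = b_n \sum_{i=1}^n 1/(p_1\cdots p_i) \in \mathbb{Z}$. Then
\begin{equation*}
z_n := b_n q - a_n \in \mathbb{A}, \qquad z_n = b_n\bigl(q - s_n\bigr) = \alpha_n,
\end{equation*}
where the last equality follows by iterating the relation $\alpha_{k-1} = (1+\alpha_k)/p_k$ from the proof of Theorem 2.1. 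Thus it suffices to show $\alpha_n \to 0$, and since $\alpha_n < 2/p_{n+1}$, this reduces to $p_n \to \infty$.

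The main obstacle, and the only part where irrationality is used, is establishing $p_n \to \infty$. My approach: the sequence $(p_n)$ is non-decreasing, so if it does not tend to infinity it is eventually constant, say $p_n = p$ for $n \ge N$. The recursion $\alpha_n = p\alpha_{n-1} - 1$ for $n > N$ has $1/(p-1)$ as a repelling fixed point, so $|\alpha_n - 1/(p-1)|$ grows geometrically in $p$; since $(\alpha_n)$ is bounded in $(0,1)$, we must have $\alpha_N = 1/(p-1) \in \mathbb{Q}$. Back-substituting through $\alpha_{k-1} = (1+\alpha_k)/p_k$ then forces $q = \alpha_0 \in \mathbb{Q}$, contradicting irrationality. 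Hence $p_n \to \infty$, $\alpha_n \to 0$, and $z_n \to 0$, completing the proof.
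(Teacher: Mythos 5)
Your proposal is correct and follows essentially the same route as the paper: the chain $1 \Rightarrow 2 \Rightarrow 3 \Rightarrow 1$, with the Engel expansion supplying $z_n = p_1\cdots p_n\bigl(q - \sum_{i=1}^n 1/(p_1\cdots p_i)\bigr) \to 0$, the standard scaling argument for $2 \Rightarrow 3$, and the $\frac{1}{b}\mathbb{Z}$ discreteness argument for $3 \Rightarrow 1$. The one place you go beyond the paper is the repelling-fixed-point argument showing that an eventually constant $(p_n)$ forces $q \in \mathbb{Q}$; the paper merely asserts ``otherwise $q$ is a rational number,'' so your justification of $p_n \to \infty$ is a welcome filled-in detail rather than a divergence.
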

 \begin{proof}
  $1 \Rightarrow 2$:  Let $q$ be an irrational number. With out loss of generality we can assume $0<q<1$, then by the above theorem,
  
  \begin{equation}
   \begin{aligned}
    &q=\displaystyle{ \sum_{i=1}^ \infty} \frac{1}{p_1.p_2....p_i}\\
    &\Rightarrow \Big|q-\displaystyle{ \sum_{i=1}^ n} \frac{1}{p_1.p_2....p_i}\Big| < \frac{2}{p_1.p_2....p_{n+1}}\\
    &\Rightarrow \Big|p_1.p_2....p_n\Big(q-\displaystyle{ \sum_{i=1}^ n} \frac{1}{p_1.p_2....p_i}\Big)\Big| < \frac{2}{p_{n+1}}\\
   \end{aligned}
\end{equation}
Also note that for $p_i < p_{i+1}$ holds infinitely often, othewise $q$ is a rational number. So $p_{n+1} \rightarrow \infty$ as $n\rightarrow \infty$.
Let $s_n= -\Big(\displaystyle{ \sum_{i=1}^ n} \frac{1}{p_1.p_2....p_i}\Big)p_1.p_2....p_n$,\,\, $r_n=p_1.p_2....p_n$, then $r_n$ and $s_n$ are integers. 
Also we have $z_n =r_n +q s_n \in \mathbb{A}$ tends to zero. 

$2 \Rightarrow 3$:
One can consider $z_n$s' are positive. Let $a, b \in \mathbb{R}$ and $a<b$. Since $\frac{1}{z_n}(b-a)$ tends to 
$\infty$, there exist $N_0$ such that $\frac{1}{z_{N_0}}a<t< \frac{1}{z_{N_0}}b$, for some integer $t$. 
This implies $a<t z_{N_0}<b$. Since $t z_{N_0} \in\mathbb{A}$,  $\mathbb{A}$ is dense in $\mathbb{R}$.

$3 \Rightarrow 1$: Let $\mathbb{A}$ is dense in $\mathbb{R}$, we have to show $q$ is irrational. By contary assume $q$ is a rational number of the form 
$q=\frac{m_0}{n_0}$, $m_0, n_0 \in \mathbb{Z}$ and $n_0 \neq 0$ . 
Clearly $n_0 \mathbb{A}\subset \mathbb{Z}$, so distance between two elements of $\mathbb{A}$ is atleast $\frac{1}{n_0}$. Hence $\mathbb{A}$ is not dense.
 \end{proof}
 
 \begin{corollary}
  $\mathbb{B}=\{\pm p^m q^n: m, n \in \mathbb{Z}\}$ is a dense subset of $\mathbb{R}$ iff $\frac{\ln p}{\ln q}$ is an irrational number.
 \end{corollary}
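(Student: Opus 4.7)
The plan is to reduce the multiplicative statement to the additive one by taking logarithms, so that Theorem 2.2 does essentially all the work. I will assume throughout that $p,q>1$ (the cases $0<p,q<1$ are symmetric, and $\ln p/\ln q$ is not defined if either equals $1$). Set $\alpha=\ln p/\ln q$ and $\mathbb{B}_+=\{p^m q^n:m,n\in\mathbb Z\}\subset(0,\infty)$.

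First I would observe
\begin{equation*}
 \ln(\mathbb{B}_+)=\{m\ln p+n\ln q:m,n\in\mathbb Z\}=(\ln q)\cdot\{m\alpha+n:m,n\in\mathbb Z\}.
\end{equation*}
By Theorem 2.2 applied with $\alpha$ in place of $q$, the set $\{m\alpha+n:m,n\in\mathbb Z\}$ is dense in $\mathbb R$ if and only if $\alpha$ is irrational. Since multiplication by the nonzero constant $\ln q$ is a homeomorphism of $\mathbb R$, density is preserved under the scaling, so $\ln(\mathbb{B}_+)$ is dense in $\mathbb R$ iff $\alpha$ is irrational.

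Next I would transfer this back through the exponential. Because $\exp\colon\mathbb R\to(0,\infty)$ is a homeomorphism, $\mathbb{B}_+=\exp(\ln\mathbb{B}_+)$ is dense in $(0,\infty)$ iff $\ln(\mathbb{B}_+)$ is dense in $\mathbb R$. Since $\mathbb B=\mathbb{B}_+\cup(-\mathbb{B}_+)$, density of $\mathbb{B}_+$ in $(0,\infty)$ immediately gives density of $-\mathbb{B}_+$ in $(-\infty,0)$, and density at $0$ follows because a dense subset of $(0,\infty)$ meets every interval $(0,\varepsilon)$, providing elements of $\mathbb B$ arbitrarily close to $0$ from both sides.

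For the converse I would run the implications backwards: if $\alpha$ is rational then the additive set $\{m\alpha+n\}$ fails density by Theorem 2.2, the scaling and $\exp$ preserve this failure, and so $\mathbb{B}_+$ is not dense in $(0,\infty)$; hence $\mathbb B$ is not dense in $\mathbb R$. I do not anticipate a real obstacle here: the entire argument is a dictionary between the additive group $\mathbb Z\ln p+\mathbb Z\ln q$ and the multiplicative group generated by $p$ and $q$. The only point requiring a moment of care is the density of $\mathbb B$ at $0$, which I handle by noting that a dense subset of $\mathbb R$ is automatically unbounded below, so its image under $\exp$ accumulates at $0$.
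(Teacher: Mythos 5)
Your proposal is correct and follows essentially the same route as the paper: pass to $\{m\ln p+n\ln q\}=(\ln q)\{m\alpha+n\}$, apply Theorem 2.2, and transfer density back and forth through the exponential (the paper justifies this last step with a mean value estimate, you with the observation that $\exp$ is a homeomorphism onto $(0,\infty)$, which amounts to the same thing). Your explicit treatment of density near $0$ and of the converse direction is a slight tidying of details the paper leaves implicit.
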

\begin{proof}
Consider the set $\bar{\mathbb{B}}=\{m\ln p+ n \ln q: m, n \in \mathbb{Z}\}$:\\
$$\{m\ln p+ n \ln q: m, n \in \mathbb{Z}\}= \ln q \{m \frac{\ln p}{\ln q}+ n: m, n \in \mathbb{Z}\}.$$
By theorem $(2.2)$, $\{m \frac{\ln p}{\ln q}+ n: m, n \in \mathbb{Z}\}$ is a dense subset of $\mathbb{R}$ iff
$\frac{\ln p}{\ln q}$ is an irrational number. Hence $\bar{\mathbb{B}}$ is a dense subset of $\mathbb{R}$.

Now we will show $\bar{\mathbb{B}}$ is dense in $\mathbb{R}$ iff ${\mathbb{B}}$ is dense in $\mathbb{R}$. Let $y> 0$.
there exist sequence $m_t\ln p+ n_t\ln q$ which converges to $\ln y $ as $t$ tends to $\infty$. Now 
\begin{equation*}
 |p^{m_t} q^{n_t}-y|=|exp(m_t\ln p+ n_t\ln q)-exp (\ln y)|=exp(c(t))|[(m_t\ln p+ n_t\ln q)- \ln y]|,
\end{equation*}
where $c(t)$ is a point lying between $(m_t\ln p+ n_t\ln q)$ and $\ln y$. Since $c(t)$ is bounded, $p^{m_t} q^{n_t}$ converges to $y$. So 
$\{p^m q^n: m, n \in \mathbb{Z}\}$ is dense subset of $[0,\infty)$. Hence $\mathbb{B}$ is dense in $\mathbb{R}$. Similarly one can show the converse.
\end{proof}

\section{Applications}
 \begin{example}
 If $q$ is a prime number, then for any $n\geq2 \in \mathbb{N}$, $q^\frac{1}{n}$ is an irrational number.
 \end{example}
\begin{proof}
Choose $m\in \mathbb{N}$ such that $m< q^\frac{1}{n} < m+1 \Rightarrow 0< q^\frac{1}{n}-m <1$. Now consider the set
$$\mathbb{A}=\Big\{ \displaystyle {\sum_{i=0}^{n-1}}c_i q^\frac{i}{n}: c_i\in \mathbb{Z}\Big\} $$.

For any $k\in \mathbb{Z}$, ${(q^\frac{1}{n}-m)}^k \in \mathbb{A} $. As $k \rightarrow \infty$,  $z_k={(q^\frac{1}{n}-m)}^k \in \mathbb{A} \rightarrow 0$. 
So for $a, b \in \mathbb{R}$, there exists $t\in \mathbb{Z}$ and $n_0\in \mathbb{N}$ such that $\frac{a}{z_{n_0}}< t<\frac{a}{z_{n_0}}$. 
This implies $a< tz_{n_0}< b$. As $tz_{n_0} \in \mathbb{Z}$, So $\mathbb{A}$ is dense in $\mathbb{R}$.

If $q^\frac{1}{n}$ is a rational number, then $q=\frac{r}{s}$. Cleary $s^n \mathbb{A} \subset \mathbb{Z}$. So the distance between any two umbers of $\mathbb{A}$
is atleast $\frac{1}{s^n}$, which is a contradiction. Hence $q^\frac{1}{n}$ is an irrational number.
\end{proof}

\begin{example}
 Any number $q$ of the form  \eqref{1}, with $p_i < p_{i+1}$ is an irrational number. In particular $e$ is an irrational number.
\end{example}
\begin{proof}
If $q$ is of the form  \eqref{1}, then  $p_1.p_2....p_n\Big(q-\displaystyle{ \sum_{i=1}^ n} \frac{1}{p_1.p_2....p_i}\Big)\rightarrow 0$. So there are elements 
$z_n \in \mathbb{A}$ tends to zero as $n$ tends to infinity. So by Theorem $(2.2)$ the result follows.
\end{proof}

\begin{example}
 If $f$ is a locally integrable function on $\mathbb{R}-\{0\}$ satisfying $\int_x ^{px}f(t)dt$ and $\int_x ^{qx}f(t)dt$ are constant with $\frac{\ln p}{\ln q}$ is an irrational
number; implies $f(t)=\frac{c}{t}$, where $c$ is  constant. 
\end{example}
\begin{proof}
Define the measure $\mu$ on multipicative group  $\mathbb{R^{+}}$ as follows: Let $E$ be any borel measurable set of $\mathbb{R^{+}}$.
Define $\mu(E)= \int_{E}f(y)dy$, then our claim is $\mu$ is a Haar measure on $\mathbb{R}$.
\begin{equation}
 \begin{aligned}
  \mu([a,b])&= \int_{a}^{b}f(y)dy\\
  \Rightarrow \mu([pa,pb])&=\int_{pa}^{pb}f(y)dy\\
  &=\int_{pa}^{a}f(y)dy+\int_{a}^{b}f(y)dy+\int_{b}^{pb}f(y)dy
  \end{aligned}
\end{equation}
 Since $\int_x ^{px}f(y)dy$ is constant. This implies $\int_{pa}^{a}f(y)dy+\int_{b}^{pb}f(y)dy$. So $\mu([pa,pb]=\int_{a}^{b}f(y)dy=\mu([a,b]$.
 Hense by approximation $\mu(pE)=\mu(E)\Rightarrow \mu(p^n E)=\mu(E)$. Following the same analysis we get, $\mu(p^n q^m E)=\mu(E)$. 
 Since the set $\{ p^m q^n: m, n \in \mathbb{Z}\}$ is dense subset of $\mathbb{R^{+}}$. So for any $a\in \mathbb{R^{+}}$, $\mu(aE)=\mu(E)$. This proves $\mu$ is a
 Haar measure.

 Note that $\bar{\mu}(E)=\int_{E}\frac{1}{t}$ is a Haar measure on the multipicative topological group $\mathbb{R^{+}}$. 
 Applying \cite[Chapter 9, Theorem(11.9)]{f1}, $\mu=c\bar{\mu}$, for some $c\in \mathbb{R}$. This in turn gives $f(t)=\frac{c_1}{t}$ on $\mathbb{R^{+}}$. 
 
 Similarly considering same Haar measure concept on $\mathbb{R^{+}}$, with $f(t)$ is replaced by $f(-t)$, we can discover $f(-t)=\frac{c_2}{t}$. 
 Now since $\int_{-1}^{p}f(t)dt= \int_{-1}^{p}f(t)dt$, $c_1=-c_2$. This completes the proof of the theorem.

\end{proof}


 \end{document}